\documentclass[11pt]{article}

\usepackage{amssymb, amsmath, amsthm}
\usepackage{enumitem}
\usepackage{mathtools}
\usepackage[onehalfspacing]{setspace}
\usepackage{etoolbox}
\usepackage[a4paper, margin=3cm]{geometry}
\usepackage{color}
\usepackage{cite}
\usepackage{authblk}
\usepackage{nccmath}


\theoremstyle{theorem}
\newtheorem{theorem}{Theorem}[section]

\newtheorem{proposition}[theorem]{Proposition}
\newtheorem{corollary}[theorem]{Corollary}

\theoremstyle{definition}

\newtheorem{example}[theorem]{Example}

\renewenvironment{proof}[1][Proof.]{\begin{trivlist}
		\item[\hskip \labelsep {\bfseries #1}]}{\qed \end{trivlist}}

\setlist{noitemsep, topsep=1ex, parsep=1ex, partopsep=1ex}

\allowdisplaybreaks
\appto\normalsize{
	\abovedisplayskip=2ex plus 1ex minus 1ex
	\belowdisplayskip=2ex plus 1ex minus 1ex
	\abovedisplayshortskip=2ex plus 1ex minus 1ex
	\belowdisplayshortskip=2ex plus 1ex minus 1ex}
\appto\small{
	\abovedisplayskip=2ex plus 1ex minus 1ex
	\belowdisplayskip=2ex plus 1ex minus 1ex
	\abovedisplayshortskip=2ex plus 1ex minus 1ex
	\belowdisplayshortskip=2ex plus 1ex minus 1ex}



\newcommand{\gap}{\vspace{1ex}}

\newcommand{\defn}[1]{\textit{#1}}

\newcommand{\R}{\mathcal{R}}
\newcommand{\Rn}{\mathcal{R}^n}

\newcommand{\Sn}{\mathcal{S}^n}

\newcommand{\V}{\mathcal{V}}

\newcommand{\tr}{\operatorname{tr}}

\newcommand{\Aut}{\operatorname{Aut}}
\newcommand{\Der}{\operatorname{Der}}

\newcommand{\abs}[1]{\left\vert #1 \right\vert}

\newcommand{\Norm}[1]{{\left\vert\kern-0.25ex\left\vert\kern-0.25ex\left\vert #1 
		\right\vert\kern-0.25ex\right\vert\kern-0.25ex\right\vert}}
\newcommand{\ip}[2]{\left< #1,\, #2 \right>}
\newcommand{\set}[2]{\left\lbrace #1 \, : \, #2 \right\rbrace}

\DeclareMathOperator{\dom}{dom}

\newcommand{\llangle}{\langle\mkern-4.5mu\langle}
\newcommand{\rrangle}{\rangle\mkern-4.5mu\rangle}



\title{\bfseries Geometric commutation principle for weakly spectral sets in Euclidean Jordan algebras}

\author[1]{Juyoung Jeong}
\affil[1]{\small
	Department of Mathematics\\
	Changwon National University\\
	Changwon 51140\\
	Republic of Korea\\
	jjycjn@changwon.ac.kr}
\date{\today}

\begin{document}

\maketitle

\begin{abstract}
	A geometric commutation principle in Euclidean Jordan algebra, recently proved by Gowda, says that, for any spectral set $E$ in a Euclidean Jordan algebra $\V$ and $a \in E$, $a$ strongly operator commutes with every element in the normal cone $N_E(a)$. Further, it can be used to establish strong operator commutativity relations in certain optimization problems. Knowing that every spectral sets are special cases of broader class of weakly spectral sets, we prove an analog of a geometric commutation principle for weakly spectral sets and study its consequences and applications.
	
	\vspace{2ex}
	
	\noindent{\bf Key Words:} Euclidean Jordan algebra, weakly spectral sets/functions, commutation principle \\
	\noindent{\bf AMS Subject Classification:} 17C20, 17C30, 52A41, 90C26
\end{abstract}

\vspace{6ex}

\section{Introduction}

In optimization theory, we often encounter the problem of optimizing a function of the following form
\[ x \in E \mapsto F(x) + \Phi(x), \]
where $E$ is a constraint set in some underlying space $\V$. Typically $F : \V \to \R$ is the actual objective function we wish to optimize, and $\Phi : \V \to \R$ plays the role of regularizing or giving a penalty to the objective function.

\gap 

In \cite{iusen-seeger-2007}, Iusem and Seeger proved the commutation principle in the setting of $\Sn$, the space of all $n \times n$ real symmetric matrices with $\ip{X}{Y} = \tr(XY)$ for $X, Y \in \Sn$. Let $E$ be a spectral set in $\Sn$ and $\Phi : \Sn \to \R$ be a spectral function. Take $A, B \in \Sn$. If $A$ is a local minimizer/maximizer of the map
\[ X \in E \mapsto \ip{B}{X} + \Phi(X), \]
then $A$ and $B$ commute, i.e., $AB = BA$. Thus, the commutation principle provides a necessary condition for an optimizer of the problem.

\gap 

Realizing $\Sn$ is a primary instance of Euclidean Jordan algebars, Ram{\'i}rez, Seeger, and Sossa \cite{ramirez-seeger-sossa-2013} generalized the commutation principle from $\Sn$ to Euclidean Jordan algebras. Gowda and Jeong \cite{gowda-jeong-2017} further weakened the hypotheses of spectrality in the theorem to (algebra) automorphism invariance, which we call weak spectrality. The commutation principle has since been extended in various directions. For instance, commutation principles in normal decomposition systems by Gowda and Jeong \cite{gowda-jeong-2017}; extended commutation principles in normal decomposition systems by Neizgoda \cite{niezgoda-2018}; and commutation principles in Fan-Theobard-von Neumann systems by Gowda \cite{gowda-2019} to name a few.

\gap 

Recently, Gowda introduced the so-called geometric commutation principle for spectral sets and a commutation principle where $F$ is no longer Fr{\'e}chet differentiable but has $E$-subgradient \cite{gowda-2020}. As a continuation of Gowda's recent work, this paper studies a geometric commutation principle for weakly spectral sets and derives the following results.

\begin{theorem} 
	Let $\V$ be a Euclidean Jordan algebra. If $E$ is a weakly spectral set in $\V$, then $a$ operator commutes with every element in the normal cone $N_{E}(a)$; in particular, with every element in $N_{\langle a \rangle}(a)$.
\end{theorem}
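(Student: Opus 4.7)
The plan is to reduce the normal-cone condition to an infinitesimal identity in the derivation algebra $\Der(\V)$, and then to a one-line algebraic calculation that uses self-adjointness of the multiplication operators $L_{x}$. It is convenient to establish the stronger ``in particular'' form first: that $a$ operator commutes with every $v \in N_{\langle a\rangle}(a)$, where $\langle a\rangle = \{\phi(a) : \phi \in \Aut(\V)\}$ is the $\Aut(\V)$-orbit through $a$ (equivalently, the smallest weakly spectral set containing $a$). Since $E$ is weakly spectral and $a \in E$, we have $\langle a\rangle \subseteq E$, whence $N_{E}(a) \subseteq N_{\langle a\rangle}(a)$, and the conclusion for $N_{E}(a)$ follows at once from that for $N_{\langle a\rangle}(a)$.

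Fix $v \in N_{\langle a\rangle}(a)$ and let $D \in \Der(\V)$ be arbitrary. Because $t \mapsto e^{tD}$ lies in $\Aut(\V)$, the smooth curve $\gamma(t) = e^{tD}(a)$ stays inside $\langle a\rangle$ with $\gamma(0) = a$ and $\gamma'(0) = D(a)$. Testing the defining inequality of the (Fr\'echet) normal cone against $\gamma(t)$ as $t \to 0^{+}$ and $t \to 0^{-}$ forces $\langle v, D(a)\rangle = 0$. Specializing $D = [L_x, L_y]$, which is always a derivation by the Jordan identity, we obtain
\[ \langle v, [L_x, L_y]\, a\rangle = 0 \quad \text{for all } x, y \in \V. \]

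The remainder is a direct algebraic computation. Expanding the commutator and using self-adjointness of $L_{x}$ and $L_{y}$,
\[ \langle v, [L_x, L_y]\, a\rangle = \langle x \circ v,\, y \circ a\rangle - \langle y \circ v,\, x \circ a\rangle = \langle x,\, (L_v L_a - L_a L_v) y\rangle. \]
Vanishing of the right-hand side for all $x, y \in \V$ forces $L_a L_v = L_v L_a$, i.e.\ $a$ and $v$ operator commute.

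The main obstacle I anticipate is handling the normal cone $N_{E}(a)$ rigorously in the non-convex, non-smooth generality in which $E$ is given; the reduction to the smooth orbit $\langle a\rangle$ is precisely designed to sidestep this, since on a smooth submanifold the normal-cone test against tangent vectors $D(a)$ is classical and unambiguous. After that, passing from the infinitesimal condition $\langle v, D(a)\rangle = 0$ to operator commutativity is a one-step calculation, with no further structure of the algebra beyond self-adjointness of the $L$-operators being needed.
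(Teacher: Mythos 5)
Your proposal is correct and follows essentially the same route as the paper: reduce $N_E(a)$ to the normal cone of the automorphism orbit, differentiate $t \mapsto \langle v, e^{tD}a\rangle$ at $t=0$ to get $\langle v, D(a)\rangle = 0$, specialize $D = [L_x, L_y]$, and conclude $L_aL_v = L_vL_a$ by self-adjointness of the $L$-operators. The only cosmetic difference is that the paper works with the $\varepsilon$-restricted orbit $\llangle a \rrangle_{\varepsilon}$ (yielding the formally stronger conclusion for the larger cone $N_{\llangle a\rrangle_{\varepsilon}}(a)$), whereas you use the full orbit $\langle a\rangle$; since your differentiation argument only uses small $|t|$, this changes nothing of substance.
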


\begin{theorem}
	Let $\V$ be a Euclidean Jordan algebra. Suppose $E$ is a weakly spectral set in $\V$ and $\Phi : \V \to \R$ is a weakly spectral function. If $a \in E$ is a local maximizer of the map
	\[ X \in E \mapsto F(x) + \Phi(x) \]
	and $F : \V \to \R$ has an $E$-subgradient at $a$, then $a$ operator commutes with every element in $\partial_{E} F(a)$. For the minimization problem, we have the same conclusion with $\partial_{E}(-F)(a)$.
\end{theorem}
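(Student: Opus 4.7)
The plan is to reduce the statement to the classical commutation principle for weakly spectral sets and functions (due to Gowda and Jeong), whose primary term is already assumed to be linear. Take any $c \in \partial_{E} F(a)$ and unpack the defining $E$-subgradient inequality: there is a neighbourhood $U$ of $a$ with
\[
F(x) \geq F(a) + \langle c,\, x - a \rangle
\qquad \text{for all } x \in E \cap U
\]
(if the definition carries an $o(\lVert x-a\rVert)$ remainder, it will be absorbed below using continuity of $\Phi$). Combining this with the local-maximum inequality $F(x) + \Phi(x) \leq F(a) + \Phi(a)$ on $E\cap U$ and cancelling $F(a)$ gives
\[
\langle c, x \rangle + \Phi(x) \;\leq\; \langle c, a \rangle + \Phi(a)
\qquad \text{for all } x \in E \cap U.
\]
Hence $a$ is a local maximizer over $E$ of the auxiliary map $x \mapsto \langle c, x\rangle + \Phi(x)$, whose primary term is now linear. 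Since $E$ is weakly spectral and $\Phi$ is weakly spectral, this reduced problem falls exactly under the classical weakly spectral commutation principle, which yields operator commutativity of $a$ and $c$. As $c \in \partial_{E} F(a)$ was arbitrary, the maximization case is done.

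The minimization case is handled symmetrically. If $a$ is a local minimizer of $F+\Phi$ on $E$ and $c \in \partial_{E}(-F)(a)$, the subgradient inequality reads $F(x) \leq F(a) + \langle -c,\, x-a\rangle$ on $E\cap U$; adding $\Phi(x)$ and using $F(x)+\Phi(x) \geq F(a)+\Phi(a)$ shows that $a$ is a local minimizer over $E$ of $x \mapsto \langle -c, x\rangle + \Phi(x)$. The classical principle again applies and forces $a$ and $-c$ (equivalently, $a$ and $c$) to operator commute.

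The main obstacle, such as it is, is bookkeeping rather than geometry. One has to (i) verify that the direction of the $E$-subgradient inequality matches the chosen extremum type, so that subtraction yields the right one-sided bound for $\langle c, \cdot\rangle + \Phi$; (ii) possibly shrink $U$ and invoke continuity of $\Phi$ in order to absorb any higher-order remainder produced by the subgradient; and (iii) quote the classical weakly spectral commutation principle in a form that permits a general weakly spectral $\Phi$. An alternative, more self-contained route would derive the conclusion from the geometric Theorem~1.1 (applied to $N_{E}(a)$) together with the fact that subgradients of a weakly spectral function at $a$ operator commute with $a$; the reduction above, however, is the shortest path.
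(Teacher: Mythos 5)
Your reduction is correct, but it takes a genuinely different route from the paper's. You linearize: given $c \in \partial_E F(a)$, the subgradient inequality $F(x) - F(a) \geq \ip{c}{x-a}$ on $E$, combined with local maximality of $F + \Phi$, shows that $a$ is a local maximizer over $E$ of the auxiliary map $x \mapsto \ip{c}{x} + \Phi(x)$, and you then invoke the classical Gowda--Jeong commutation principle (weakly spectral $E$ and $\Phi$, Fr\'{e}chet differentiable primary term) as a black box; the minimization case is the symmetric computation with $-c$. The paper argues directly instead: it shows the $\varepsilon$-restricted automorphism orbit $\llangle a \rrangle_{\varepsilon}$ lies inside $N_a \cap E$ (where $N_a$ is the neighborhood witnessing local maximality), uses weak spectrality of $\Phi$ to reduce the local-max inequality to $F(a) \geq F(x)$ on that orbit, deduces that any $d \in \partial_{\langle a \rangle} F(a)$ belongs to $N_{\llangle a \rrangle_{\varepsilon}}(a)$, and applies its own geometric commutation principle (Theorem 3.1(b)) --- precisely the ``alternative, more self-contained route'' you mention at the end. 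The paper's route buys the stronger conclusion (commutation with all of $\partial_{\langle a \rangle} F(a)$, which contains $\partial_E F(a)$ since $\langle a \rangle \subseteq E$) and the strong-commutativity addendum for essentially simple algebras; your route buys brevity and avoids redoing the derivation-flow argument, at the cost of leaning on an external theorem whose proof uses the very same $e^{tD}$ mechanism. Note also that your reduction, run with $E$ replaced by the weakly spectral set $\langle a \rangle$, would recover the paper's stronger version.

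Two small caveats, neither fatal. First, the paper's definition of $\partial_E F(a)$ is the exact global inequality over all of $E$, with no neighborhood and no remainder, so your hedges are unnecessary --- and the specific claim that an $o(\norm{x-a})$ remainder ``will be absorbed using continuity of $\Phi$'' would not actually work: a remainder would leave $\ip{c}{x} + \Phi(x) \leq \ip{c}{a} + \Phi(a) + o(\norm{x-a})$, which does not make $a$ a local maximizer of the auxiliary map, and one would have to reopen the proof of the classical principle rather than quote it. Since the definition here carries no remainder, your main line is clean. Second, you should confirm that the version of the classical principle you cite permits a \emph{local} maximizer and an automorphism-invariant (not merely spectral) $\Phi$; the Gowda--Jeong result does, so the citation is sound.
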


The organization of the paper is as follows: Section 2 contains definitions and basic properties of Euclidean Jordan algebras. We study geometric commutation principle and its consequences in Section 3 and give examples and applications of the geometric commutation principle in Section 4.

\section{Preliminaries}

Throughout the paper, $\V$ represents a Euclidean Jordan algebra of rank $n$, together with the Jordan product $x \circ y$ and inner product $\ip{x}{y} = \tr(x \circ y)$, where $\tr$ is the trace operator. Let $e$ be the unit element in $\V$. The set of the squares $\V_+ = \set{x \circ x}{x \in \V}$ is called the \defn{symmetric cone} of $\V$.

\gap 

Recall that $\V$ is \defn{simple} if it is not a direct product of nonzero Euclidean Jordan algebras.  The classification theorem \cite[Chapter V]{faraut-koranyi-1994} for Euclidean Jordan algebras states that any nonzero Euclidean Jordan algebra is, in a unique way, a direct product of simple Euclidean Jordan algebras and there are only five types of simple Euclidean Jordan algebras up to isomorphism. One of the most prominent simple algebra is $\Sn$, the algebra of $n \times n$ real symmetric matrices with Jordan and inner products given by
\[ X \circ Y = \frac{1}{2}(XY + YX), \;\; \ip{X}{Y} = \tr(XY) \quad \text{for all} \;\; X, Y \in \Sn. \]
In this algebra $\Sn_+$ is the set of all $n \times n$ positive semidefinite matrices. The other four simple algebras are: the algebra of $n \times n$ complex/quaternion Hermitian matrices, the algebra of $3 \times 3$ octonion Hermitian matrices, and the Jordan spin algebra. We refer \cite{faraut-koranyi-1994} for more detail.

\gap 

The $n$ dimensional Euclidean space $\Rn$ can be regarded as a Euclidean Jordan algebra by defining the Jordan product to be the component-wise product, with $\Rn_+$ being the nonnegative orthant of $\Rn$. Note that $\Rn$ is not simple, as it is simply the direct product of $n$ copies of $\mathcal{S}^1$. We say that $\V$ is \defn{essentially simple} if it is either simple or (isomorphic to) $\Rn$.

\gap 

An element $c \in \V$ is an \defn{idempotent} if $c \circ c = c$. If an idempotent $c \neq 0$ cannot be written as the sum of other nonzero idempotents, then it is called \defn{primitive}. The following is the (complete) spectral theorem in Euclidean Jordan algebra \cite[Theorem III.1.2]{faraut-koranyi-1994}. 

\begin{proposition}
	For every $x \in \V$, there exist an (ordered) set of $n$ primitive idempotents $(e_1, e_2, \ldots, e_n)$, called a \defn{Jordan frame}, satisfying
	\[ e_1 + e_2 + \cdots + e_n = e \quad \text{and} \quad e_i \circ e_j = 0 \quad \text{for all} \;\; i \neq j, \]
	and uniquely determined real numbers $\lambda_1(x) \geq \lambda_2(x) \geq \cdots \geq \lambda_n(x)$, called the \defn{eigenvalues} of $x$, such that
	\[ x = \lambda_1(x) e_1 + \lambda_2(x) e_2 + \cdots + \lambda_n(x) e_n. \]
\end{proposition}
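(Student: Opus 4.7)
The plan is to work inside the associative commutative subalgebra $\mathbb{R}[x] \subseteq \V$ generated by $x$ and $e$. Since $\V$ is power-associative, $\mathbb{R}[x]$ is a finite-dimensional commutative associative unital algebra, and so $x$ has a minimal polynomial $p(t) \in \mathbb{R}[t]$. The critical structural step is to invoke the Euclidean hypothesis — positive definiteness of the trace form $\ip{y}{z} = \tr(y \circ z)$ — in order to force $p$ to split over $\mathbb{R}$ as a product of \emph{distinct} linear factors $p(t) = (t-\mu_1)(t-\mu_2)\cdots(t-\mu_k)$. Granting this, Lagrange interpolation produces polynomials $q_i(t) \in \mathbb{R}[t]$ with $q_i(\mu_j) = \delta_{ij}$, so the elements $c_i := q_i(x)$ are pairwise orthogonal idempotents in $\V$ summing to $e$, yielding the coarse resolution $x = \mu_1 c_1 + \mu_2 c_2 + \cdots + \mu_k c_k$.

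I would then refine this resolution into a Jordan frame by induction on rank. Each Peirce subspace $\V(c_i,1) := \{y \in \V : c_i \circ y = y\}$ inherits a Euclidean Jordan algebra structure with unit $c_i$ and trace form restricted from $\V$. If $c_i$ is not primitive, the definition supplies nonzero idempotents $c_i', c_i''$ with $c_i = c_i' + c_i''$; squaring and using $c_i^2 = c_i$ forces $c_i' \circ c_i'' = 0$, and both summands lie in $\V(c_i,1)$, whose rank strictly drops at each step. Iterating produces primitive orthogonal idempotents $\{e_j : j \in J_i\}$ with $c_i = \sum_{j \in J_i} e_j$. Concatenating the index sets $J_i$ and reindexing so that the associated scalars form a non-increasing sequence delivers $x = \sum_{j=1}^{n} \lambda_j(x) e_j$ with $\lambda_1(x) \geq \cdots \geq \lambda_n(x)$; the total count equals the rank $n$ of $\V$, which may be characterized intrinsically as the degree of the generic minimal polynomial. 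Uniqueness of the eigenvalue sequence with multiplicities reduces to noting that each distinct $\mu_i$ and its multiplicity — equal to the rank of the Peirce subalgebra at the polynomially determined idempotent $c_i = q_i(x)$ — is intrinsic to $x$.

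The main obstacle is the splitting step, which is precisely where the Euclidean hypothesis is indispensable. The standard route is a contradiction argument using positivity of the trace form: a repeated real root of $p$ would produce a nonzero element $y \in \mathbb{R}[x]$ with $y \circ y = 0$ and hence $\tr(y \circ y) = 0$, violating positive definiteness; an irreducible real quadratic factor of $p$ would likewise yield, via a polynomial-in-$x$ construction, a nonzero element of non-positive trace-square. Over a general (non-Euclidean) Jordan algebra this step fails, which is why the spectral theorem is a distinctive feature of the Euclidean setting and why the proof is typically cited (as here) to Faraut–Korányi rather than reproduced from scratch.
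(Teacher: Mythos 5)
Your proposal is correct and takes essentially the same route as the paper, which offers no proof of its own but simply cites \cite[Theorem III.1.2]{faraut-koranyi-1994}: your argument --- splitting the minimal polynomial of $x$ into distinct real linear factors via positive definiteness of the trace form, extracting a complete orthogonal system of idempotents $c_i = q_i(x)$ by Lagrange interpolation, and refining each $c_i$ to primitive idempotents inside the Peirce subalgebra $\V(c_i,1)$ --- is precisely the canonical Faraut--Kor\'anyi proof (their first and second spectral theorems). As you yourself anticipated, the only reason the paper does not reproduce this argument is that it is standard, so there is nothing to reconcile.
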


Now, we define the \defn{eigenvalue map} $\lambda : \V \to \Rn$ which takes $x \in \V$ to $\lambda(x) \in \Rn$ whose components are the eigenvalues of $x$ arranged in decreasing order. It is well-known that the symmetric cone is the set of all elements whose eigenvalues are nonnegative. Thus, $\V_+ = \set{x \in \V}{\lambda(x) \geq 0}$.

\gap 

A linear transformation $D : \V \to \V$ is a \defn{derivation} if it satisfies
\[ D(x \circ y) = D(x) \circ y + x \circ D(y) \quad \text{for all} \;\; x, y \in \V. \]
The set of all derivations of $\V$ is denoted by $\Der(\V)$. A linear transformation $A : \V \to \V$ is an \defn{algebra automorphism} if it is invertible and preserves the Jordan product, i.e.,
\[ A(x \circ y) = A(x) \circ A(y) \quad \text{for all} \;\; x, y \in \V. \]
The set of all algebra automorphisms of $\V$ is denoted by $\Aut(\V)$. Here, we list some facts which will be used frequently throughout the manuscript.
\begin{itemize}
	\item If $D$ is a derivation on $\V$, then the map $e^{tD}$ is an algebra automorphism of $\V$ for all $t \in \R$, see \cite[page 36]{faraut-koranyi-1994}.
	\item It is known \cite{jeong-gowda-2017} that $\V$ is essentially simple if and only if any Jordan frame can be mapped onto another Jordan frame by an algebra automorphism of $\V$. Hence, for essentially simple algebra $\V$ and $x, y \in \V$ such that $\lambda(x) = \lambda(y)$, there exists $A \in \Aut(\V)$ such that $y = Ax$.
\end{itemize}

We now describe two notions of commutativity concepts in Euclidean Jordan algebras. Given $c \in \V$, define a linear transformation $L_c : \V \to \V$ by $L_c(x) = c \circ x$. Then, for $a, b \in \V$, we say that $a$ and $b$ \defn{operator commute} if $L_aL_b = L_bL_a$, i.e.,
\[ a \circ (b \circ x) = b \circ (a \circ x) \quad \text{for all} \;\; x, \in \V. \]
It is known \cite[Lemma X.2.2]{faraut-koranyi-1994} that $a$ and $b$ operator commute if and only if they are simultaneously diagonalizable, that is, there exists a common Jordan frame $(e_1, e_2, \ldots, e_n)$ such that
\[ a = \sum_{i=1}^{n} a_i e_i, \quad b = \sum_{i=1}^{n} b_i e_i, \]
where $\{ a_1, \ldots, a_n \} = \{ \lambda_1(a), \ldots, \lambda_n(a)\}$ and $\{ b_1, \ldots, b_n \} = \{ \lambda_1(b), \ldots, \lambda_n(b)\}$. Specializing, we say that $a$ and $b$ \defn{strongly operator commute} if there exists a common Jordan frame $(e_1, e_2, \ldots, e_n)$ such that
\[ a = \sum_{i=1}^{n} \lambda_i(a) e_i, \quad b = \sum_{i=1}^{n} \lambda_i(b) e_i. \]
In this case, it is also said that $a, b$ are simultaneously ordered diagonalizable. It is clear that strong operator commutativity implies operator commutativity, but not conversely. For instance, in $\mathcal{S}^2$, two matrices $\begin{bmatrix} 3 & 1 \\ 1 & 3 \end{bmatrix}$ and $\begin{bmatrix*}[r] 2 & -1 \\ -1 & 2 \end{bmatrix*}$ operator commute, but not strongly as they are not simultaneously ordered diagonalizable.

\gap 

For $a \in \V$, we define an \defn{eigenvalue orbit} of $a$ by
\[ [a] = \set{x \in \V}{\lambda(x) = \lambda(a)}. \]
Similarly, for $a \in \V$, an \defn{automorphism orbit} and \defn{$\varepsilon$-restricted automorphism orbit} of $x$ are respectively defined by
\[ \langle a \rangle = \set{Aa}{A \in \Aut(\V)} \quad \text{and} \quad \llangle a \rrangle_{\varepsilon} = \set{e^{tD}a}{D \in \Der(\V), \abs{t} < \varepsilon}. \]
Since $e^{tD} \in \Aut(\V)$ for all $D \in \Der(\V)$ and $t \in \R$, it is evident that  $\llangle a \rrangle_{\varepsilon} \subseteq \langle a \rangle$. Also, as automorphisms preserve the eigenvalues, we also have $\langle a \rangle \subseteq [a]$ with equality holding if and only if $\V$ is essentially simple. 

\gap 

Lastly, we consider spectrality in Euclidean Jordan algebras. For any (permutation invariant) set $Q$ in $\Rn$, the set $E = \lambda^{-1}(Q) \in \V$ is called a \defn{spectral set}. A set $E$ in $\V$ is said to be a \defn{weakly spectral set} if $A(E) \subseteq E$ for all $A \in \Aut(\V)$; thus $E$ is invariant under all algebra automorphisms of $\V$. From the definition, we see that $x \in E \Rightarrow [x] \subseteq E$ if and only if $E$ is spectral, and that $x \in E \Rightarrow \langle x \rangle \subseteq E$ if and only if $E$ is weakly spectral. Similarly, for any (permutation invariant) function $f : \Rn \to \R$, the function $F = f \circ \lambda$ is called a \defn{spectral function} from $\V$ to $\R$. A function $F : \V \to \R$ is called a \defn{weakly spectral function} if $F(Ax) = F(x)$ for all $x \in \Aut(\V)$. Recall that, when $\V = \Sn$, spectrality and weak spectrality are equivalent. This fact has been generalized to essentially simple algebras \cite{jeong-gowda-2017}, which is described below. 

\begin{proposition}
	Every spectral set/function in $\V$ is weakly spectral.	Additionally, the converse holds if and only if $\V$ is essentially simple.
\end{proposition}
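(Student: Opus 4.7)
The plan is to handle the two statements in turn, using the characterization of essentially simple algebras recorded in the preliminaries.

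For the forward implication (spectral implies weakly spectral), the key observation is that algebra automorphisms preserve eigenvalues. If $x = \sum_{i=1}^n \lambda_i(x)\, e_i$ is the spectral decomposition and $A \in \Aut(\V)$, then $(A e_1, \ldots, A e_n)$ is again a Jordan frame (automorphisms preserve the defining relations $c \circ c = c$ and $e_i \circ e_j = 0$), so $Ax = \sum_{i=1}^n \lambda_i(x)\, (A e_i)$ is a spectral decomposition of $Ax$ and hence $\lambda(Ax) = \lambda(x)$. Given this, if $E = \lambda^{-1}(Q)$ is a spectral set and $x \in E$, then $\lambda(Ax) = \lambda(x) \in Q$, so $Ax \in E$. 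For $F = f \circ \lambda$ spectral, $F(Ax) = f(\lambda(Ax)) = f(\lambda(x)) = F(x)$.

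For the converse direction, first suppose $\V$ is essentially simple and let $E$ be weakly spectral. I will verify the criterion recorded in the excerpt, namely $x \in E \Rightarrow [x] \subseteq E$. Given $y \in [x]$, essential simplicity provides $A \in \Aut(\V)$ with $y = Ax$ (see the second bullet in the preliminaries), and then $y = Ax \in E$ by weak spectrality. The function case is analogous: if $F$ is weakly spectral, then $\lambda(y) = \lambda(x)$ yields $y = Ax$ for some $A \in \Aut(\V)$ and hence $F(y) = F(Ax) = F(x)$, so $F$ descends to a well-defined permutation-invariant function $f$ on $\lambda(\V)$ (extended arbitrarily elsewhere) with $F = f \circ \lambda$.

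For the remaining implication, assume $\V$ is not essentially simple. By the characterization in the preliminaries, there exist two Jordan frames $(e_1,\ldots,e_n)$ and $(f_1,\ldots,f_n)$ not related by any algebra automorphism. Taking $x := \sum_{i=1}^n i\, e_i$ and $y := \sum_{i=1}^n i\, f_i$, we have $\lambda(x) = \lambda(y)$ but $y \notin \langle x \rangle$, so $\langle x \rangle \subsetneq [x]$. I then set $E := \langle x \rangle$: for any $A \in \Aut(\V)$, $A \langle x \rangle = \set{(AB)x}{B \in \Aut(\V)} \subseteq \langle x \rangle$, so $E$ is weakly spectral, while $y \in [x] \setminus E$ witnesses the failure of spectrality. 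For a functional counterexample, the indicator function of $\langle x \rangle$ (or, if one prefers a continuous example, any $\Aut(\V)$-invariant function that is not constant on $[x]$) is weakly spectral but not spectral.

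The only genuinely delicate step is the final construction: one must exploit the failure of essential simplicity to produce an element whose eigenvalue orbit properly contains its automorphism orbit. This is exactly what the preliminaries' characterization of essential simplicity (via transitivity of $\Aut(\V)$ on Jordan frames) supplies, so the argument reduces to a direct citation of that fact together with the orbit construction above.
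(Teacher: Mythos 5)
The paper itself offers no proof of this proposition --- it is quoted from \cite{jeong-gowda-2017} (``This fact has been generalized to essentially simple algebras'') --- so there is no internal argument to compare against; judged on its own, your reconstruction is correct and is essentially the standard argument from that reference. The forward direction (automorphisms map Jordan frames to Jordan frames, hence preserve $\lambda$, hence preserve spectral sets and functions) is right, and both halves of the equivalence with essential simplicity correctly reduce to the frame-transitivity characterization recorded in the paper's preliminaries --- indeed, the paper's remark that $\langle a \rangle \subseteq [a]$ with equality iff $\V$ is essentially simple would have let you cite the proper inclusion $\langle x \rangle \subsetneq [x]$ directly rather than rebuild it. Two small points deserve tightening. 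First, in the function case the extension of $f$ is not ``arbitrary'': since every $u \in \Rn$ has its decreasing rearrangement $u^{\downarrow}$ in $\lambda(\V)$, permutation invariance forces $f(u) := f(u^{\downarrow})$, and a genuinely arbitrary extension could destroy the permutation invariance required of a spectral function. Second, in the final construction you assert $y \notin \langle x \rangle$ without justification; the point is that the eigenvalues $1, \ldots, n$ of $x$ and $y$ are distinct, so the ordered Jordan frame in each spectral decomposition is unique, and hence $y = Ax$ would force $Ae_i = f_i$ for all $i$, contradicting the choice of the two frames unrelated by any automorphism. With those two sentences supplied, the proof is complete.
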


\section{Geometric commutation principle for weakly spectral sets}

Given a nonempty set $S \subseteq \V$, the \defn{normal cone} of $S$ at $a$ is defined by
\[ N_{S}(a) := \set{d \in \V}{\ip{d}{x-a} \leq 0 \; \text{for all} \; x \in S}. \]
Let $F : \V \to (-\infty, \infty]$, $S \subseteq \V$, and $a \in S \cap \operatorname{dom} F$. The \defn{subdifferential} of $F$ at $a$ relative to $S$ is defined by
\[ \partial_{S} F(a) := \set{d \in \V}{F(x) - F(a) \geq \ip{d}{x-a} \; \text{for all} \; x \in S}. \]
Any element of $\partial_{S} F(a)$ is called an \defn{$S$-subgradient} of $F$ at $a$.
When $S = \V$, we simply write $\partial F(a)$ in place of $\partial_{\V} F(a)$. It is well known that if $F$ if Fr{\'e}chet differentiable at $a$, then $\partial F(a) = \{\nabla F(a)\}$.

\gap

We now state an analog of the geometric commutation principle for weakly spectral sets. While the item $(a)$ of the theorem below is a restatement of the Theorem 2 in \cite{gowda-2020}, it is included here to emphasize the parallel structure between two items. The key idea of the proof is borrowed from \cite{gowda-jeong-2017}.

\begin{theorem} \label{thm: geometric commutation principle for weakly spectral sets}
	In a Euclidean Jordan algebra $\V$, suppose $E \subseteq \V$ and $a \in E$.
	\begin{itemize}
		\item[$(a)$] If $E$ is spectral, then $a$ strongly operator commutes with every element in $N_{[a]}(a)$; hence, with every element in $N_{E}(a)$.
		\item[$(b)$] If $E$ is weakly spectral, then $a$ operator commutes with every element in $N_{\llangle a \rrangle_{\varepsilon}}(a)$ for all $\varepsilon > 0$; hence, with every element in $N_{\langle a \rangle}(a)$ and $N_{E}(a)$.
	\end{itemize}
	Moreover, when $\V$ is essentially simple, strong operator commutativity is obtained in $(b)$.
\end{theorem}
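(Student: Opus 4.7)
The plan is to use the inclusion chains in each case to reduce to the largest normal cone. For (a), $E$ being spectral gives $[a] \subseteq E$, so $N_{E}(a) \subseteq N_{[a]}(a)$, and it suffices to prove strong operator commutativity for every $d \in N_{[a]}(a)$. For (b), weak spectrality of $E$ together with $\llangle a \rrangle_{\varepsilon} \subseteq \langle a \rangle$ yields $\llangle a \rrangle_{\varepsilon} \subseteq \langle a \rangle \subseteq E$, so $N_{E}(a) \subseteq N_{\langle a \rangle}(a) \subseteq N_{\llangle a \rrangle_{\varepsilon}}(a)$, and I only need to handle $d \in N_{\llangle a \rrangle_{\varepsilon}}(a)$.

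Part (a) is the Fan-Theobald-von Neumann commutation principle of \cite{gowda-2020}. Any $d \in N_{[a]}(a)$ satisfies $\ip{d}{a} \geq \ip{d}{x}$ for every $x \in [a]$, while the trace inequality gives $\ip{d}{x} \leq \ip{\lambda(d)}{\lambda(x)} = \ip{\lambda(d)}{\lambda(a)}$, and the upper bound is attained at some $x \in [a]$ that strongly operator commutes with $d$; hence $\ip{d}{a} = \ip{\lambda(d)}{\lambda(a)}$, and the equality case of the trace inequality delivers strong operator commutativity of $a$ and $d$.

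For (b), fix $d \in N_{\llangle a \rrangle_{\varepsilon}}(a)$. For each $D \in \Der(\V)$ the curve $t \mapsto e^{tD} a$ lies in $\llangle a \rrangle_{\varepsilon}$ for $|t| < \varepsilon$, so $\phi(t) := \ip{d}{e^{tD} a - a}$ has a local maximum at $t = 0$; hence $\phi'(0) = 0$, which reads $\ip{d}{D(a)} = 0$ for every derivation $D$. I then specialize to the inner derivation $D = [L_{a}, L_{d}]$, a derivation by a standard Jordan-algebra identity. Using the self-adjointness of $L_{a}$, $L_{d}$, and $L_{a^{2}}$, a short calculation rewrites $\ip{d}{[L_{a}, L_{d}](a)} = 0$ as $\norm{a \circ d}^{2} = \ip{a^{2}}{d^{2}}$, equivalently $\ip{(L_{a^{2}} - L_{a}^{2}) d}{d} = 0$. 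From the Peirce decomposition of $\V$ along a Jordan frame of $a$, the operator $L_{a^{2}} - L_{a}^{2}$ acts on the Peirce subspace $V_{ij}$ as multiplication by $(\lambda_{i}(a) - \lambda_{j}(a))^{2}/4$, so it is positive semidefinite; consequently $d$ lies in the sum of Peirce pieces on which $a$ has constant eigenvalue, which by the standard criterion of Faraut-Kor{\'a}nyi is exactly operator commutativity of $a$ and $d$.

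For the moreover clause, when $\V$ is essentially simple we have $\langle a \rangle = [a]$ and hence $N_{\langle a \rangle}(a) = N_{[a]}(a)$, so applying part (a) to each $d \in N_{\langle a \rangle}(a)$ upgrades the conclusion of (b) from operator commutativity to strong operator commutativity. The main obstacle, I expect, is the choice of derivation in part (b): recognizing that plugging the inner derivation $[L_{a}, L_{d}]$ into $\ip{d}{D(a)} = 0$ produces the positive-semidefinite scalar identity $\ip{(L_{a^{2}} - L_{a}^{2}) d}{d} = 0$, which Peirce calculus then decodes as commutativity. Once this step is in place, the remaining pieces are routine consequences of the definitions and the standard Jordan-algebraic facts listed in Section 2.
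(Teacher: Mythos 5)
Your argument is correct, and for part $(b)$ it coincides with the paper's proof up to the key identity $\ip{Da}{d} = 0$ for every $D \in \Der(\V)$ --- same inclusion chain $N_E(a) \subseteq N_{\langle a \rangle}(a) \subseteq N_{\llangle a \rrangle_{\varepsilon}}(a)$, same differentiation of $t \mapsto \ip{e^{tD}a}{d}$ at its maximizer $t=0$ --- but it then takes a genuinely different route at the decisive step. The paper substitutes the entire two-parameter family of inner derivations $D = L_uL_v - L_vL_u$ with $u,v \in \V$ arbitrary and, using self-adjointness of $L_u, L_v$ and the identity $L_x(y) = L_y(x)$, rewrites $\ip{(L_uL_v - L_vL_u)a}{d} = 0$ as $\ip{(L_aL_d - L_dL_a)u}{v} = 0$; letting $u,v$ range over $\V$ kills the commutator outright, with no structure theory needed. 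You instead substitute the single derivation $D = [L_a, L_d]$, obtaining the scalar identity $\norm{a \circ d}^2 = \ip{a^2}{d^2}$, i.e.\ $\ip{(L_{a^2} - L_a^2)d}{d} = 0$; your Peirce computation is right (on $V_{ij}$ the operator $L_{a^2} - L_a^2$ acts as multiplication by $(\lambda_i(a)-\lambda_j(a))^2/4 \geq 0$), so $d$ has no Peirce components across distinct eigenvalues of $a$, which is indeed the standard criterion for operator commutativity. The trade-off: the paper's adjointness trick is shorter and self-contained, using only \cite[Proposition II.4.1]{faraut-koranyi-1994}, whereas your variant spends a single derivation but pays with Peirce calculus and the commutativity characterization; in exchange it yields the quantitative equality $\norm{a \circ d}^2 = \ip{a^2}{d^2}$ and the positive semidefiniteness of $L_{a^2} - L_a^2$, which have independent interest. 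Your sketch of $(a)$ via the trace inequality and its equality case matches the cited source (the paper simply refers to \cite{gowda-2020}), and your reading of the ``moreover'' clause --- upgrading via $\langle a \rangle = [a]$ and part $(a)$, applied to $N_{\langle a \rangle}(a)$ and $N_E(a)$ but deliberately not to $N_{\llangle a \rrangle_{\varepsilon}}(a)$, whose normal cone is too large (e.g.\ in $\Rn$, where $\Der(\Rn) = \{0\}$ makes $\llangle a \rrangle_{\varepsilon} = \{a\}$) --- is the correct one, a point the paper's proof leaves entirely implicit.
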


\begin{proof}
	$(a)$ The proof can be found in \cite{gowda-2020}.
	
	\gap 
	
	$(b)$ Since $E$ is weakly spectral, we have $\llangle a \rrangle_{\varepsilon} \subset \langle a \rangle \subseteq E$; hence $N_{E}(a) \subseteq N_{\langle a \rangle}(a) \subset N_{\llangle a \rrangle_{\varepsilon}}(a)$. Thus, it is sufficient to show that $a$ operator commutes with every elements in $N_{\llangle a \rrangle_{\varepsilon}}(a)$. Choose $d \in N_{\llangle a \rrangle_{\varepsilon}}(a)$ so that $\ip{d}{x - a} \leq 0$ for all $x \in \llangle a \rrangle_{\varepsilon}$. This implies that 
	\[ \ip{e^{tD}a}{d} \leq \ip{a}{d} \quad \text{for all} \;\; D \in \Der(\V), \abs{t} < \varepsilon. \]
	Since the map $t \mapsto \ip{e^{tD}a}{d}$ attains its maximum at $t = 0$, its derivative at $t = 0$ is zero, which implies $\ip{Da}{d} = 0$. Now, for any $u, v \in \V$, the map $D := L_{u}L_{v} - L_{v}L_{u}$ is known to be a derivation \cite[Proposition II.4.1]{faraut-koranyi-1994}. Substituting this map into $\ip{Da}{d} = 0$ yields
	\[ \ip{(L_{u}L_{v} - L_{v}L_{u})a}{d} = 0. \]
	Note that $L_{u}$ and $L_{v}$ are self-adjoint and $L_{x}(y) = L_{y}(x)$ for all $x, y \in \V$. Thus, an easy calculation shows that the above equality is equivalent to
	\[ \ip{(L_{a}L_{d} - L_{d}L_{a})u}{v} = 0. \]
	Since $u, v$ can be chosen arbitrarily, we must have $L_{a}L_{d} - L_{d}L_{a} = 0$, implying that $a$ and $d$ operator commute. As $d \in N_{\llangle a \rrangle_{\varepsilon}}(a)$ is arbitrary, $a$ operator commutes with every elements in $N_{\llangle a \rrangle_{\varepsilon}}(a)$, completing the proof.
\end{proof}

\begin{corollary}
	For any idempotent $c \in \V$, $y \in N_{\V_+}(c)$ if and only if $-y \in \V_+$ and $\ip{y}{c} = 0$.
\end{corollary}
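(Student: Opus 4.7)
My plan is to derive both directions from the standard convex-analysis identity $N_K(c) = (-K^{*}) \cap c^{\perp}$, valid for any convex cone $K$, specialized to $K = \V_+$. The key ingredient is the self-duality $\V_+^{*} = \V_+$, a well-known property of the symmetric cone in every Euclidean Jordan algebra; combined with the fact that the idempotent $c$ lies in $\V_+$, this makes both implications a short computation.

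For the easy direction, I would assume $-y \in \V_+$ and $\ip{y}{c} = 0$ and pick any $x \in \V_+$. Self-duality gives $\ip{-y}{x} \geq 0$, so $\ip{y}{x - c} = \ip{y}{x} - \ip{y}{c} = \ip{y}{x} \leq 0$, placing $y$ in $N_{\V_+}(c)$.

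For the forward direction, I would first extract $\ip{y}{c} = 0$ by testing the normal-cone inequality against the two admissible perturbations $x = 0 \in \V_+$ and $x = 2c \in \V_+$; these give $\ip{y}{c} \geq 0$ and $\ip{y}{c} \leq 0$ respectively. The normal-cone condition then reduces to $\ip{y}{x} \leq 0$ for all $x \in \V_+$, and self-duality upgrades this to $-y \in \V_+$. Alternatively, one can route through Theorem~\ref{thm: geometric commutation principle for weakly spectral sets}$(a)$ applied to the spectral set $\V_+$: since $c$ and $y$ strongly operator commute, there is a common Jordan frame in which $c = e_1 + \cdots + e_k$ (with $k = \operatorname{rank}(c)$) and $y = \sum_i \lambda_i(y) e_i$, and perturbations $x = c \pm t e_j \in \V_+$ then force $\lambda_j(y) = 0$ for $j \leq k$ and $\lambda_j(y) \leq 0$ for $j > k$, recovering both conclusions at once.

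I do not anticipate a serious obstacle: the corollary is essentially the normal-cone formula at a point of a self-dual cone, and its inclusion right after the theorem seems designed to showcase that the same conclusion can be extracted from strong operator commutativity. If one chooses the commutation-principle route, the only nuance is tracking the two sign regimes (indices $\leq k$ versus $> k$) and verifying that each perturbed element truly lies in $\V_+$.
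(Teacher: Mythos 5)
Your proposal is correct, and your primary route is genuinely different from the paper's. The paper proves the ``if'' direction via the von Neumann-type trace inequality $\ip{y}{x} \leq \ip{\lambda(y)}{\lambda(x)}$ (citing Baes), where you invoke self-duality of $\V_+$ directly --- a more elementary step that accomplishes the same thing. More substantially, for the ``only if'' direction the paper deliberately routes through Theorem \ref{thm: geometric commutation principle for weakly spectral sets}: it uses the commutation principle to place $y$ and $c$ in a common Jordan frame, writes $c = e_1 + \cdots + e_k$, and tests the perturbations $x = e_j + c$ to conclude each eigenvalue $y_j \leq 0$ (the paper's displayed inequality has the sign flipped, $0 \leq \ip{y}{e_j}$, which is a typo --- your version has the signs right), then gets $\ip{y}{c} = 0$ from $x = 0, 2c$ exactly as you do. Your primary argument bypasses the commutation principle entirely: after extracting $\ip{y}{c} = 0$, the normal-cone inequality collapses to $\ip{y}{x} \leq 0$ on all of $\V_+$, and self-duality gives $-y \in \V_+$. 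This buys both brevity and generality --- your argument never uses that $c$ is an idempotent and proves the formula $N_{\V_+}(c) = (-\V_+) \cap c^{\perp}$ at every point $c \in \V_+$ --- whereas the paper's choice, as you correctly surmised, is pedagogical: the corollary is positioned to illustrate how the geometric commutation principle yields such facts. Your alternative commutation-based route is essentially the paper's proof with two refinements (two-sided perturbations $c \pm t e_j$ pinning down $\lambda_j(y) = 0$ for $j \leq k$, and strong rather than mere operator commutativity, which is legitimate here since $\V_+$ is spectral), and your verification that each perturbed point stays in $\V_+$ is exactly the nuance that needed checking.
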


\begin{proof}
	Note that $\V_+$ is (weakly) spectral set in $\V$. First, assume that $-y \in \V_+$ and $\ip{y}{c} = 0$, then choose arbitrary $x \in \V_+$. Since $\lambda(x) \geq 0$ and $\lambda(y) \leq 0$, we see that
	\[ \ip{y}{x-c} = \ip{y}{x} \leq \ip{\lambda(y)}{\lambda(x)} \leq 0, \]
	where the first inequality follows from von Neumann-type trace inequality in Euclidean Jordan algebra \cite{baes-2007}. Hence, $y \in N_{\V_+}(c)$.
	
	\gap 
	
	Conversely, suppose $y \in N_{\V_+}(c)$, so that $\ip{y}{x - c} \leq 0$ for all $x \in \V_+$. Then, $y$ operator commutes with $c$ by the previous theorem. Thus, we can write
	\[ y = \sum_{i=1}^{n} y_i e_i, \quad c = e_1 + \cdots + e_k. \]
	Then, for $j = 1, \ldots, n$, putting $x = e_j + c \in \V_+$ and using the orthonormality of $(e_1, \ldots, e_n)$ give 
	\[ 0 \leq \ip{y}{(e_j + c) - c} = \ip{y}{e_j} = y_j. \]
	This means that all the eigenvalues of $y$ are nonnegative, thus $-y \in \V_+$. Moreover, plugging $x = 0, 2c \in \V_+$ into $\ip{y}{x - c} \leq 0$ gives $\ip{y}{c} = 0$, as desired. 
\end{proof}

\begin{corollary}
	In a Euclidean Jordan algebra $\V$, let $d \in \partial F(a)$ for some function $F : \V \to \R$.
	\begin{itemize}
		\item[$(a)$] If $F$ is spectral, then $d$ and $a$ strongly operator commute.
		\item[$(b)$] If $F$ is weakly spectral, then $d$ and $a$ operator commute.
	\end{itemize}
	Moreover, when $\V$ is essentially simple, strong operator commutativity is obtained in $(b)$.
\end{corollary}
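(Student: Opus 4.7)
The plan is to derive both parts of this corollary as immediate applications of Theorem~\ref{thm: geometric commutation principle for weakly spectral sets}, by showing that the defining inequality for a subgradient, restricted to an appropriate orbit of $a$ on which $F$ is constant, forces $d$ into the corresponding normal cone.

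For part $(a)$, I would suppose $F$ is spectral and observe that $F$ takes the constant value $F(a)$ on the eigenvalue orbit $[a]$, since $\lambda(x) = \lambda(a)$ for every $x \in [a]$ implies $F(x) = f(\lambda(x)) = f(\lambda(a)) = F(a)$. For any such $x$, the subgradient inequality $F(x) - F(a) \geq \ip{d}{x-a}$ then collapses to $\ip{d}{x-a} \leq 0$, which is exactly the statement $d \in N_{[a]}(a)$. Part $(a)$ of Theorem~\ref{thm: geometric commutation principle for weakly spectral sets} then concludes that $a$ and $d$ strongly operator commute.

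For part $(b)$, the argument is strictly parallel, replacing the eigenvalue orbit by the automorphism orbit. Since $F$ is weakly spectral, $F(Aa) = F(a)$ for every $A \in \Aut(\V)$, so $F \equiv F(a)$ on $\langle a \rangle$; the same subgradient computation then gives $d \in N_{\langle a \rangle}(a)$, and part $(b)$ of the theorem yields operator commutativity. For the final claim, when $\V$ is essentially simple the proposition at the end of Section~2 tells us every weakly spectral function is actually spectral, so part $(a)$ of the corollary applies and the conclusion sharpens to strong operator commutativity.

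There is no real obstacle here: the subgradient condition is assumed globally on $\V$, so its restriction to the orbit of $a$ is automatic, and Theorem~\ref{thm: geometric commutation principle for weakly spectral sets} absorbs all the genuine work. The only conceptual point worth highlighting is that spectrality/weak spectrality of $F$ is used only through the invariance of the level value $F(a)$ under the relevant orbit; no differentiability or convexity of $F$ enters the argument.
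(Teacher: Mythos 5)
Your proposal is correct and follows essentially the same route as the paper: restrict the global subgradient inequality to the orbit $[a]$ (resp.\ $\langle a \rangle$) where $F$ is constant, conclude $d \in N_{[a]}(a)$ (resp.\ $d \in N_{\langle a \rangle}(a)$), and invoke Theorem~\ref{thm: geometric commutation principle for weakly spectral sets}. The only cosmetic difference is your treatment of the essentially simple case, which you route through the equivalence of spectrality and weak spectrality rather than through the ``moreover'' clause of the theorem; both are valid and equally short.
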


\begin{proof}
	Suppose $d \in \partial F(a)$. By the definition of the subgradient,
	\[ F(x) - F(a) \geq \ip{d}{x-a} \quad \text{for all} \;\; x \in \V. \]
	
	$(a)$ In the case that $F$ is spectral, $\dom F$ is a spectral set, thus we have $[a] \subseteq \dom F$. In particular, the inequality above is true for all $x \in [a]$. Now, since $F(x) = F(a)$ for all $x \in [a]$, we get $0 \geq \ip{d}{x-a}$ for all $x \in [a]$. It follows that $d \in N_{[a]}(a)$; hence $d$ and $a$ strongly operator commute by Theorem \ref{thm: geometric commutation principle for weakly spectral sets} $(a)$.
	
	\gap 
	
	$(b)$ Note that $\dom F$ is a weakly spectral set as $F$ is weakly spectral. Thus, $\langle a \rangle \subseteq \dom F$. Since any $x \in \langle a \rangle$ can be written as $x = Aa$ for some $\Aut(\V)$ and $F(Aa) = F(a)$. It follows that  $0 \geq \ip{d}{x-a}$ for all $x \in \langle a \rangle$, meaning that $d \in N_{\langle a \rangle}(a)$. Hence $d$ and $a$ operator commute by Theorem \ref{thm: geometric commutation principle for weakly spectral sets} $(b)$.
\end{proof}

The following theorem extends Theorem 3 in \cite{gowda-2020}. We include the result as part $(a)$ below to highlight a similarity between two items. Also, a similar argument to the following theorem has been obserbed in \cite{jeong-david-2024}.

\begin{theorem} \label{thm: commutation principle with weak spectrality}
	Let $\V$ be a Euclidean Jordan algebra. Consider the optimization problem
	\[ \max_{x \in E} F(x) + \Phi(x), \]
	where $E \subseteq \V$, $\Phi : \V \to \R$, and $F : \V \to \R$ has an $E$-subgradient at $a$.
	\begin{itemize}
		\item[$(a)$] If $a \in E$ is a global maximizer, $E$ is spectral and has a $E$-subgradient at $a$, and $\Phi$ is a spectral function, then $a$ strongly operator commutes with every elements in $\partial_{[a]} F(a)$, in particular with those in $\partial_E F(a)$ and $\partial F(a)$.
		\item[$(b)$] If $a \in E$ is a local maximizer, $E$ is weakly spectral and has a $E$-subgradient at $a$, and $\Phi$ is a spectral function, then $a$ operator commutes with every elements in $\partial_{\langle a \rangle} F(a)$, in particular with those in $\partial_E F(a)$ and $\partial F(a)$.
	\end{itemize}
	For the minimization problem, the same conclusion holds with $\partial_{E}(-F)(a)$ in both items $(a)$ and $(b)$. Moreover, when $\V$ is essentially simple, we obtain strong operator commutativity in $(b)$.
\end{theorem}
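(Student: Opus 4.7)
Part $(a)$ is Theorem 3 of \cite{gowda-2020}, so I would focus on part $(b)$, adapting the proof of Theorem \ref{thm: geometric commutation principle for weakly spectral sets}(b). Fix an arbitrary $d \in \partial_{\langle a \rangle} F(a)$. Weak spectrality of $E$ yields $\langle a \rangle \subseteq E \subseteq \V$, so the inclusions $\partial F(a) \subseteq \partial_{E} F(a) \subseteq \partial_{\langle a \rangle} F(a)$ hold and it suffices to treat $d$ in the largest of these three sets; the hypothesis that $F$ has an $E$-subgradient at $a$ keeps the sets nonempty.

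For each $D \in \Der(\V)$, I would consider the curve $x(t) := e^{tD}a$. Since $e^{tD} \in \Aut(\V)$, we have $x(t) \in \langle a \rangle \subseteq E$ for every $t \in \R$ and $x(t) \to a$ as $t \to 0$, so local maximality of $a$ gives $F(x(t)) + \Phi(x(t)) \leq F(a) + \Phi(a)$ for all sufficiently small $|t|$. Because every spectral function is weakly spectral, $\Phi(x(t)) = \Phi(a)$, hence $F(x(t)) \leq F(a)$. Combined with the $\langle a \rangle$-subgradient inequality this yields
\[ \ip{d}{e^{tD}a - a} \leq F(x(t)) - F(a) \leq 0 \]
for all $|t|$ small, so the smooth map $t \mapsto \ip{d}{e^{tD}a - a}$ attains a local maximum at $t = 0$ and differentiating produces $\ip{Da}{d} = 0$.

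From this point the argument reuses the mechanism of Theorem \ref{thm: geometric commutation principle for weakly spectral sets}(b): substituting the derivation $D = L_{u}L_{v} - L_{v}L_{u}$ (a derivation by \cite[Proposition II.4.1]{faraut-koranyi-1994}) into $\ip{Da}{d} = 0$ and using the self-adjointness of $L_u, L_v$ and the identity $L_x(y) = L_y(x)$ rearranges it into $\ip{(L_aL_d - L_dL_a)u}{v} = 0$ for arbitrary $u, v \in \V$, so $L_a L_d = L_d L_a$. The minimization case follows by applying the argument to $-F$ and the spectral function $-\Phi$, and the essentially-simple strengthening follows because the same family of inequalities $\ip{d}{e^{tD}a-a} \leq 0$ is exactly the input on which the essentially-simple clause of Theorem \ref{thm: geometric commutation principle for weakly spectral sets}(b) relies. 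I expect the main conceptual subtlety to be this last reduction: local maximality only controls $F + \Phi$ in a Euclidean neighbourhood of $a$, so one must convert that control, direction by direction, into first-order (and, for strong commutativity in the essentially-simple case, also second-order) information at $t = 0$ before the algebraic structure of $\Der(\V)$ can be brought to bear.
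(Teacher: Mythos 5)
Your proposal is correct and takes essentially the same route as the paper: the paper's proof likewise restricts local maximality to the automorphism curves $e^{tD}a$ (packaged as the inclusion $\llangle a \rrangle_{\varepsilon} \subseteq N_a \cap E$), cancels $\Phi$ by weak spectrality to get $F(x) \leq F(a)$ there, deduces $\ip{d}{x-a} \leq 0$ for $x \in \llangle a \rrangle_{\varepsilon}$, i.e.\ $d \in N_{\llangle a \rrangle_{\varepsilon}}(a)$, and then invokes Theorem \ref{thm: geometric commutation principle for weakly spectral sets}$(b)$, whose commutator-derivation computation you simply inline; the minimization and essentially simple clauses are handled the same way in both. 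The only difference is presentational --- you argue derivation-by-derivation rather than through the normal cone of $\llangle a \rrangle_{\varepsilon}$, which if anything treats the dependence of $\varepsilon$ on $D$ a bit more carefully than the paper's single-$\varepsilon$ phrasing.
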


\begin{proof}
	$(a)$ The proof can be found in \cite{gowda-2020}.
	
	\gap 
	
	$(b)$ Since $a$ is a local maximizer of the given problem, we have
	\[ F(a) + \Phi(a) \geq F(x) + \Phi(x) \quad \text{for all} \;\; x \in N_a \cap E, \]
	where $N_a$ denotes an open ball around $a$. Since $a \in E$ and $E$ is weakly spectral, we see that $Aa \in E$ for all $A \in \Aut(\V)$. In particular, with fixed derivation $D$, we have $e^{tD}a \in E$ for all $t \in \R$. Since the map $t \mapsto e^{tD}a$ is continuous and has the value $a$ at $t = 0$, there exists $\varepsilon > 0$ such that $e^{tD}a \in N_a$ for all $\abs{t} < \varepsilon$. It follows that $\llangle a \rrangle_{\varepsilon} \subseteq N_a \cap E$ from the previous observations.
	Note that $\Phi(x) = \Phi(a)$ for all $x \in \llangle a \rrangle_{\varepsilon}$. Then
	\[ F(a) \geq F(x) \quad \text{for all} \;\;\; x \in \llangle a \rrangle_{\varepsilon}. \]
	Now, we show that $a$ operator commutes with $d \in \partial_{\langle a \rangle} F(a)$. Firstly, we see that  
	\[ F(x) - F(a) \geq \ip{d}{x - a} \quad \text{for all} \;\; x \in \langle a \rangle. \]
	Since $\llangle a \rrangle_{\varepsilon} \subseteq \langle a \rangle$ and $\langle a \rangle$ is weakly spectral, the inequality reduces to $\ip{d}{x - a} \leq 0$ for all $x \in \llangle a \rrangle_{\varepsilon}$, meaning $a \in N_{\llangle x \rrangle_{\varepsilon}}(a)$. Hence, by Theorem \ref{thm: geometric commutation principle for weakly spectral sets} $(b)$, $a$ operator commutes with $d$.
\end{proof}

\begin{example}
	In Theorem \ref{thm: commutation principle with weak spectrality} $(a)$, the assumption that $a$ is a global maximizer cannot be relaxed to allow $a$ to be merely a local maximizer. To see this, consider $\V = \R^2$ and
	\[ \max_{(x, y) \in E} F(x, y) = (x-1)^2 + (y-2)^2, \]
	where $E = \set{(x, y) \in \R^2}{x, y \geq 0, x+y = 3}$. Note that $(3, 0) \in E$ is a global maximizer with the value $F(3, 0) = 8$, and $(0, 3) \in E$ is a local maximizer with the value $F(0, 3) = 2$. We now show that a local maximizer $(0, 3)$ does not strongly operator commute with some elements in $\partial_{[\{(0, 3)\}]} F(0, 3)$. Indeed, a straightforward calculation reveals that
	\[ \partial_{[\{(0, 3)\}]} F(0, 3) = \set{(x, y) \in \R^2}{x \leq y + 2}. \]
	Therefore, we have $(2, 0) \in \partial_{[\{(0, 3)\}]}F(0, 3)$, while $(0, 3)$ does not strongly operator commute with $(2, 0)$. On the other hand, for a global maximizer $(3, 0)$, we have
	\[ \partial_{[\{(3, 0)\}]} F(3, 0) = \set{(x, y) \in \R^2}{x \geq y + 2}. \]
	Thus, $(3, 0)$ strongly operator commutes with every element in $\partial_{[\{(3, 0)\}]}F(3, 0)$.
\end{example}

It is noteworthy to mention that, when $\V$ is essentially simple, one can always expect strong operator commutativity in the geometric commutation principle even with weak spectrality assumption. However, this is not always the case for general algebra $\V$.

\begin{example}
	Let $\V = \mathcal{S}^1 \times \mathcal{S}^2$ and take 
	\[ A := \left[ \begin{array}{@{}c|cc@{}}
		2 & 0 & 0 \\ \hline 0 & 2 & 1 \\ 0 & 1 & 2
	\end{array} \right], \quad B := \left[ \begin{array}{@{}c|rr@{}}
		2 & 0 & 0 \\ \hline 0 & 2 & -1 \\ 0 & -1 & 2
	\end{array} \right], \quad C := \left[ \begin{array}{@{}c|cc@{}}
		4 & 0 & 0 \\ \hline 0 & 2 & 1 \\ 0 & 1 & 2
	\end{array} \right]. \]
	Consider the optimization problem $\displaystyle \max_{X \in \langle B \rangle} \ip{C}{X}$. Then, we see that $A \in \langle B \rangle$ is a (global) maximizer to this problem. Note that $\langle B \rangle$ is a weakly spectral set and that $A$ operator commutes with $C$ but not strongly.
\end{example}


\section*{Acknowledgment}
This research was supported by Changwon National University in 2023--2024.




\end{document}